\theoremstyle{thmstyleone}%
\newtheorem{theorem}{Theorem}
\theoremstyle{thmstyletwo}%
\newtheorem{lemma}{Lemma}%
\newtheorem{remark}{Remark}%
\theoremstyle{thmstylethree}%
\begin{document}

\title[TSD: A Geometry-Based Gradient Algorithm with Guaranteed R-Linear Convergence]{Triangle Steepest Descent: A Geometry-Based Gradient Algorithm with Guaranteed R-Linear Convergence}


\author[1]{\fnm{Ya} \sur{Shen}}\email{ya.shen200012@gmail.com}

\author*[1,2]{\fnm{Qing-Na} \sur{Li}}\email{qnl@bit.edu.cn}
\equalcont{These authors contributed equally to this work.}

\author[3,4]{\fnm{Yu-Hong} \sur{Dai}}\email{dyh@lsec.cc.ac.cn}
\equalcont{
This work was supported by National Natural Science Foundation of China grants (No. 11271032 and No. 12271526).}

\affil*[1]{\orgdiv{School of Mathematics and Statistics}, \orgname{Beijing Institute of Technology}, \orgaddress{
\city{Beijing}, \postcode{100081}, \state{Beijing}, \country{China}}}

\affil[2]{\orgdiv{Beijing Key Laboratory on MCAACI, Key Laboratory of Mathematical Theory and Computation in Information Security},  \orgname{Beijing Institute of Technology}, \orgaddress{
\city{Beijing}, \postcode{100081}, \state{Beijing}, \country{China}}}


\affil[3]{\orgdiv{LSEC, ICMSEC, AMSS}, \orgname{Chinese Academy of Sciences}, \orgaddress{
\city{Beijing}, \postcode{100190}, \state{Beijing}, \country{China}}}

\affil[4]{\orgdiv{School of Mathematical Sciences}, \orgname{University of Chinese Academy of Sciences}, \orgaddress{
\city{Beijing}, \postcode{100049}, \state{Beijing}, \country{China}}}


\abstract{
Gradient methods are among the simplest yet most widely used algorithms for unconstrained optimization. Motivated by a geometric property of the steepest descent (SD) method that can alleviate the zigzag behavior in quadratic problems, we develop a new gradient variant called the Triangle Steepest Descent (TSD) method. The TSD method introduces a cycle parameter $j$ that governs the periodic combination of past search directions, providing a geometry-driven mechanism to enhance convergence. To the best of our knowledge, TSD is the first formally established geometry-based gradient scheme since Akaike (1959). We prove that TSD is at least R-linearly convergent for strongly convex quadratic problems and demonstrate through extensive numerical experiments that it exhibits superlinear behavior, outperforming the Barzilai-Borwein (BB) method and monotone Dai-Yuan gradient method (DY) in quadratic cases. These results suggest that incorporating geometric information into gradient directions offers a promising avenue for developing efficient optimization algorithms.
}

\keywords{The steepest descent method, Two-dimensional quadratic termination property, Gradient methods with retards, Gradient method, Unconstrained optimization}



\maketitle
\section{Introduction}
\label{intro}
Gradient methods for solving the optimization problem 
\begin{equation}
\label{object_function_intro}
\min_{x\in\mathbb{R}^n} f(x)
\end{equation} 
where $f(x)$ is continuously differentiable, take the form of 
\begin{equation}
x_{k+1}=x_k-\alpha_kg_k,
\end{equation}
 where $g_k$ is the gradient of $f(x)$ at point $x_k$ and $\alpha_k$ is the stepsize. The well-known steepest descent method (SD) proposed by Cauchy calculates $\alpha_k$ by the exact line search. 
However, it presents zigzag behavior and slow Q-linear convergence in the quadratic situation \cite{Akaike1959}. 

In 1988, Barzilai and Borwein proposed the BB method \cite{Barzilai1988} by exploiting the quasi-Newton property. Barzilai and Borwein proved that in a 2-dimensional special  case, the BB method is R-superlinearly convergent \cite{Barzilai1988}. For strongly convex quadratic case in general dimension, it is globally and R-linearly convergent \cite{Dai2002}. It is also adapted to solve non-quadratic functions \cite{Dai2006,Raydan1997} and constrained optimization problems \cite{Birgin2000,Dai2005b,Dai2006a,Hager2006,Serafini2005}. Compared with other gradient methods, the BB method has less computational complexity because there is no matrix-vector product involved. Furthermore, as pointed out by Fletcher \cite{Fletcher2005}, the BB method is more stable for general optimization problems than the conjugate gradient method, while the conjugate gradient method only works well for quadratic models.

In 2002, Raydan and Svaiter \cite{Raydan2002} proposed the relaxed Cauchy method (RC, also named relaxed steepest descent method) and the Cauchy-Barzilai-Borwein method (CaBB). CaBB method and the alternate step (AS) gradient method \cite{Dai2003} are the same method proposed independently, both showing that, with the convex quadratic setting,
\begin{equation}
\label{fx}
f(x)=\frac12x^{\top}Ax+b^{\top}x,
\end{equation}
where $A\in\mathbb{R}^{n\times n}$ is symmetric positive definite, $b\in\mathbb{R}^n$, 
 CaBB can produce gradients approximating eigenvector of $A$ and stepsizes approximating eigenvalue of $A$. The RC method can improve the SD method greatly, and the CaBB method can improve the BB method greatly \cite{Raydan2002}. Based on this point, a natural thought is to extend the CaBB method by using more repeated stepsizes to obtain better performance.

Actually, this clue of reusing previous stepsizes to accelerate gradient methods is termed as the cyclic strategy, which work very well in practice \cite{Dai2005,Frassoldati2008,Asmundis2014,Zou2018,OviedoLeon2021,Huang2022}. It is first mentioned in 1999, Friedlander et al. \cite{Friedlander1998} introduced a class of gradient methods with retards and pointed out that this class of gradient methods with retards including the cyclic SD method (CSD) can get better performance. In 2005, Dai and Fletcher \cite{Dai2005a} analyzed  theoretical properties of the CSD method, concluding that the CSD method may not work well if the cycle number $m$ is not big. In 2006, Dai et al. \cite{Dai2006} proposed the cyclic Barzilai-Borwein method (CBB) and the adaptive cyclic Barzilai-Borwein method (ACBB). They showed that these methods improve the BB method and ACBB is comparable to or even better than conjugate gradient algorithms if the objective function is highly non-linear. Afterwards, the cyclic strategy is widely used \cite{Dai2005,Frassoldati2008,Asmundis2014,Zou2018,OviedoLeon2021,Huang2022}.

Another strategy for accelerating gradient methods is by equipping a gradient method with the two-dimensional quadratic property. Yuan stepsize \cite{Yuan2006,Yuan2008} can achieve the minimizer of a two-dimensional strictly convex quadratic function in three iterations, by cooperating with the SD stepsize in an alternate manner. Dai-Yuan method \cite{Dai2005} is a generalization of the former method, it can outperform the nonmonotone BB method. The BBQ stepsize proposed by Huang \cite{Huang2021}, computed from a necessary condition ensuring the next gradient parallels to some eigenvector of the Hessian, can not only ensures two-dimensional quadratic termination property, but also is extended for unconstrained optimization with the Grippo-Lampariello-Lucidi (GLL) nonmonotone line search (This line search is much cheaper than the exact line search).

Of course, there are many other strategies for accelerating gradient methods, but we do not intend to elaborate further on them. In summary, the majority of existing acceleration techniques for gradient methods focus on refining the stepsize, either through quasi-Newton approximations (BB-type methods), cyclic reuse, or two-dimensional quadratic termination properties. However, despite these advances, the idea of systematically exploiting search directions has not been formalized since Akaike (1959)  \cite{Akaike1959}. This gap provides the main motivation for our work.

Unlike existing gradient variants that focus solely on stepsize tuning, the triangle steepest descent method (TSD) proposed by us is the first method in 66 years to exploit the potential of search directions. By periodically aggregating past gradient directions, TSD inherently avoids zigzagging while maintaining simplicity. Although Akaike (1959) \cite{Akaike1959} hinted at a similar idea, our work establishes its at least R-linearly convergence theory on strongly convex case and conducts extensive numerical experiments on it. Numerical results show that TSD has superlinear convergence behavior for strongly convex quadratic problems, and outperforms the BB and DY methods in ill-conditioned cases, although its performance is worse than BBQ and ABBmin2, and its directional strategy opens a new avenue for gradient methods. The detail of the idea for TSD is in next section. 

In this paper, our main contributions are as follows:
(i), we propose TSD, (ii), We establish its at least R-linear convergence for strongly convex quadratic problems, which, to the best of our knowledge, is the first convergence guarantee for a method exploiting this specific geometric idea, (iii), we validate its performance numerically. 

 The organization of this paper is as follows. 
In Section \ref{TSD}, we introduce our main idea of TSD. 
In Section \ref{sec_convergence}, we show its R-linear convergence rate for strongly convex quadratic problems. 
In Section \ref{results}, extensive numerical results confirm our theoretical results and show the effectiveness of TSD. We draw our conclusions in Section \ref{conclusion}.

In this paper, we use subscript $k$ to denote the k-th iteration.
We use $Diag(x)$ to denote the diagonal matrix with diagonal elements in a vector $x\in\mathbb{R}^n$.

\section{The TSD method}

\label{TSD}
In this section, we will introduce the triangle steepest descent method (TSD). 
%
Without loss of generality, we consider the strongly convex quadratic problem as defined in {\rm(\ref{fx})}, where 
\begin{equation}
\label{fa}
A=Diag\left(\lambda_1, \cdots, \lambda_n\right),\ 0<\lambda_1<\cdots<\lambda_n.
\end{equation}
\begin{figure}[htbp]
\centering
\begin{tikzpicture}[scale=3]
\filldraw [black] (0cm,0cm) circle [radius=0.15mm];
\draw (0cm,0.1cm) node[] {O};
\draw (0,0) ellipse [x radius=2cm, y radius=1cm];
\draw (0,0) ellipse [x radius=1cm, y radius=0.5cm];
  \draw (-1.75cm,-0.4841cm) -- (0,0);
\filldraw [black] (-1.75cm,-0.4841cm) circle [radius=0.15mm];
\draw (-1.75cm,-0.3841cm) node[] {\large$x_0$};
 \draw (-1.75cm,-0.4831cm) --(-1.0899479cm,0.2462571cm) ;
\draw (0,0) ellipse [x radius=1.196059cm, y radius=0.5980294cm];
\filldraw [black] (-1.0899479cm,0.2462571cm) circle [radius=0.15mm];
\draw (-1.0899479cm,0.3462571cm) node[] {\large$x_1$};
\draw (-1.0899479cm,0.2462571cm) -- (-0.6258824cm,-0.1731370cm);
\draw (0,0) ellipse [x radius=0.7152862cm, y radius=0.3576431cm];
\filldraw [black] (-0.6258824cm,-0.1731370cm) circle [radius=0.15mm];
\draw (-0.6258824cm,-0.0731370cm)  node[] {\large$x_2$};
\draw[black, very thick, ->, >=latex] (-1.75cm,-0.4841cm) -- (-0.6258824cm,-0.1731370cm);
\node[black] at (-0.5cm,-0.3cm) {\fontsize{9}{12}\selectfont Direction to minimum};
 \draw  (-0.6258824cm,-0.1731370cm) -- (-0.38981670cm,0.08807313cm);
\draw (0,0) ellipse [x radius=0.4277669cm, y radius= 0.2138835cm];
\filldraw [black] (-0.38981670cm,0.08807313cm) circle [radius=0.15mm];
\draw  (-0.38981670cm,0.18807313cm)node[] {\large$x_3$};
  \draw  (-0.38981670cm,0.08807313cm) -- (-0.22384502cm,-0.06192193cm);
\draw (0,0) ellipse [x radius=0.25582cm, y radius=  0.12791cm];
\filldraw [black] (-0.22384502cm,-0.06192193cm) circle [radius=0.15mm];
\draw  (-0.22384502cm,0.04cm) node {\large$x_4$};
  \draw  (-0.22384502cm,-0.06192193cm) -- (-0.1394168cm,0.0314991cm);
\draw (0,0) ellipse [x radius=0.1529896cm, y radius=0.0764948cm];
\filldraw [black] (-0.1394168cm,0.0314991cm) circle [radius=0.15mm];
  \draw   (-0.1394168cm,0.0314991cm) -- (-0.08005752cm,-0.02214620cm);
\draw (0,0) ellipse [x radius=0.09149329cm, y radius=0.04574665cm];
\filldraw [black] (-0.08005752cm,-0.02214620cm) circle [radius=0.15mm];
\end{tikzpicture}
\caption{General situation diagram of 2-dimensional SD method}
\label{2-dimensionalSD}       
\end{figure}

To derive the TSD method, we start with a property of the SD method proved by Akaike \cite{Akaike1959}, that is, for a strongly convex quadratic problem, the directions of gradients generated from SD tend to two different directions:
\begin{equation}
d_1=\lim_{k\rightarrow\infty}\frac{g_{2k}}{\|g_{2k}\|_2}, \quad d_2=\lim_{k\rightarrow\infty}\frac{g_{2k-1}}{\|g_{2k-1}\|_2}.
\end{equation}

%
Furthermore, Theorem 2.4 in \cite{Huang2022} shows that this property actually can be extended to all gradient methods whose stepsizes are reciprocals of some of Rayleigh quotients of $A$. 

Figure \ref{2-dimensionalSD} is an illustration of the first few iterations of the SD method in a 2-dimensional situation. In 2-dimensional situations, since $g_k^Tg_{k+1}=0$, equally, $g_k\perp g_{k+1}$, so, $g_k\parallel g_{k+2}$, for all $k\in\mathbb{Z}_+$,  for SD, thus the triangle determined by $x_0$, $x_1$, $x_2$ is similar to the triangle determined by $x_2$, $x_3$, $x_4$. Thus, we know, in the case here, the SD method will linearly converge to (0, 0) and will never equal to (0, 0) in any iteration.

Combining the above property of SD and the fact that Figure \ref{2-dimensionalSD} shows that the direction from $x_0$ to $x_2$ points toward the minimum, we propose the triangle steepest descent method (TSD). 
See Algorithm \ref{TSDal} for the details of TSD. In Algorithm \ref{TSDal}, $j\ge3$ is a parameter. The stepsize $\alpha_k^{TSD}$ is given by the exact line search

\begin{equation}
\label{TSDeq}
\alpha_k^{TSD}=-\frac{p_k^{\top}g_k}{p_k^{\top}Ap_k},
\end{equation}
where 
\begin{equation}
p_k=\left\{\begin{aligned}&-\alpha_{k-1}^{TSD}g_{k-1}-\alpha_{k-2}^{TSD}g_{k-2}, &k\ne0\ and\mod(k,j)=0,\\
 &-g_k, &\hbox{otherwise}.
\end{aligned}\right.
\label{pk}
\end{equation}
\begin{algorithm}[h]
	\caption{The TSD method for solving (\ref{fx})}
    \label{TSDal}
    \begin{algorithmic}[0] 
 \renewcommand{\algorithmicrequire}{\textbf{Input:}}
        \Require  initial point $x_0$, integer $j\ge 3$, tolerance $\epsilon>0$, $k:=0$. 
\State S1: If $\|g_k\|\le \epsilon$, stop, otherwise, compute $p_k$ by (\ref{pk});
\State S2: Compute $\alpha_k^{TSD}$ by (\ref{TSDeq});
\State S3: $x_{k+1}=x_k+\alpha_k^{TSD}p_k$. $k:=k+1$. Go to S1.
    \end{algorithmic}
\end{algorithm}


\section{The convergence rate of TSD}
\label{sec_convergence}
First, we have the following lemma about TSD.
\begin{lemma}
\label{TSDlem}
For the TSD method in Algorithm \ref{TSDal} to solve {\rm(\ref{object_function_intro})} with $f(x)$ defined in {\rm(\ref{fx})} and {\rm(\ref{fa})}, $p_k$ defined by (\ref{pk}) is a descent direction.
\end{lemma}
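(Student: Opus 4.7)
The plan is to treat the two branches of the piecewise definition of $p_k$ separately. The ``otherwise'' branch is immediate: when $p_k=-g_k$ one gets $p_k^{\top}g_k=-\|g_k\|_2^2<0$ whenever $g_k\ne 0$, so all the work is in the branch where $k\ne 0$ and $\mathrm{mod}(k,j)=0$.

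For that branch, I would first observe that since $j\ge 3$, both $k-1$ and $k-2$ lie in the ``otherwise'' branch, so $p_{k-1}=-g_{k-1}$ and $p_{k-2}=-g_{k-2}$, i.e.\ the two preceding iterations are plain SD steps. From the exact line search property applied at step $k-1$ and $k-2$ this gives the orthogonality relations $g_k^{\top}g_{k-1}=0$ and $g_{k-1}^{\top}g_{k-2}=0$. Expanding
\begin{equation*}
p_k^{\top}g_k=-\alpha_{k-1}^{TSD}\,g_{k-1}^{\top}g_k-\alpha_{k-2}^{TSD}\,g_{k-2}^{\top}g_k,
\end{equation*}
the first term vanishes and I am left with showing $\alpha_{k-2}^{TSD}\,g_{k-2}^{\top}g_k>0$. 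Positivity of $\alpha_{k-2}^{TSD}$ is automatic because it equals $g_{k-2}^{\top}g_{k-2}/(g_{k-2}^{\top}Ag_{k-2})$ with $A\succ 0$, so the real task is to prove $g_{k-2}^{\top}g_k>0$.

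To do that I would use the quadratic gradient recursion $g_k=g_{k-1}-\alpha_{k-1}^{TSD}Ag_{k-1}$ and $g_{k-1}=g_{k-2}-\alpha_{k-2}^{TSD}Ag_{k-2}$. Taking inner product of the first identity with $g_{k-2}$, using $g_{k-1}^{\top}g_{k-2}=0$, and then substituting $Ag_{k-2}=(g_{k-2}-g_{k-1})/\alpha_{k-2}^{TSD}$ from the second identity, the expression collapses to
\begin{equation*}
g_k^{\top}g_{k-2}=\frac{\alpha_{k-1}^{TSD}}{\alpha_{k-2}^{TSD}}\,\|g_{k-1}\|_2^2>0,
\end{equation*}
which finishes the argument; in fact the same computation yields the cleaner identity $p_k^{\top}g_k=-\alpha_{k-1}^{TSD}\|g_{k-1}\|_2^2$.

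There is no real obstacle here, only bookkeeping: one must be careful that $j\ge 3$ really does force two consecutive SD steps before a combined step (so that the orthogonality relations apply), and one must implicitly assume $g_{k-1},g_{k-2}\ne 0$ (otherwise the algorithm has already terminated at S1). I would state both observations explicitly at the start of the proof so the two cases above cover everything.
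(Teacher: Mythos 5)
Your proof is correct, and it reaches the key inequality $g_k^{\top}g_{k-2}>0$ by a cleaner route than the paper. Both arguments share the same skeleton: note that steps $k-1$ and $k-2$ are plain SD steps (valid since $j\ge 3$), invoke the exact-line-search orthogonality $g_k^{\top}g_{k-1}=g_{k-1}^{\top}g_{k-2}=0$, and thereby reduce $p_k^{\top}g_k<0$ to the sign of $g_k^{\top}g_{k-2}$. Where you diverge is in establishing that sign. The paper writes $g_k^{\top}g_{k-2}=-\alpha_{k-1}^{TSD}\,g_{k-2}^{\top}\left(I-\alpha_{k-2}^{TSD}A\right)Ag_{k-2}$ and then bounds $g_{k-2}^{\top}\left(I-\alpha_{k-2}^{TSD}A\right)Ag_{k-2}\le 0$ via a Cauchy--Schwarz argument, which forces it to separately rule out the equality case (where $g_{k-2}$ would be an eigenvector of $A$, contradicting $g_{k-1}\ne 0$). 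You instead substitute $Ag_{k-2}=(g_{k-2}-g_{k-1})/\alpha_{k-2}^{TSD}$ to obtain the exact identity
\begin{equation*}
g_k^{\top}g_{k-2}=\frac{\alpha_{k-1}^{TSD}}{\alpha_{k-2}^{TSD}}\,\|g_{k-1}\|_2^2,
\qquad
p_k^{\top}g_k=-\alpha_{k-1}^{TSD}\,\|g_{k-1}\|_2^2,
\end{equation*}
which makes strict positivity (hence strict descent) immediate from $g_{k-1}\ne 0$ and dispenses with the equality-case analysis entirely. Your version also yields a quantitative expression for the amount of descent, which the paper's inequality-based argument does not. The only housekeeping points — that $j\ge 3$ guarantees two consecutive SD steps before each combined step, and that $g_{k-1},g_{k-2}\ne 0$ because the algorithm has not terminated — are exactly the ones the paper relies on as well, and you flag them explicitly.
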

\begin{proof}
$\ $
Since $-g_k$ is a descent direction, it suffices to prove $p_k$ is descent when $k\ne0$ and $mod(k,j)=0$.
When $k\ne0$ and $mod(k,j)=0$, 
$$g_{k}=Ax_{k}+b=A\left(x_{k-1}-\alpha_{k-1}^{T S D} g_{k-1}\right)+b=\left(I-\alpha_{k-1}^{T S D}A\right) g_{k-1}.$$
Similarly, $g_{k-1}=\left(I-\alpha_{k-2}^{T S D} A\right) g_{k-2}$. As a result, it holds that
$$g_{k}^{\top} g_{k-1}=g_{k-1}^{\top}\left(I-\alpha_{k-1}^{T S D} A\right) g_{k-1}=g_{k-1}^{\top} g_{k-1}-\frac{g_{k-1}^{\top} g_{k-1}}{g_{k-1}^{\top} A g_{k-1}} g_{k-1}^{\top} A g_{k-1}=0.$$
Similarly,  $g_{k-1}^{\top} g_{k-2}=0$.

It holds that
$$
\begin{aligned}
g_{k}^{\top} g_{k-2}&=g_{k-1}^{\top}\left(I-\alpha_{k-1}^{T S D} A\right) g_{k-2}\\
&=g_{k-1}^{\top} g_{k-2}-\alpha_{k-1}^{T S D} g_{k-1}^{\top} A g_{k-2} \\
&=-\alpha_{k-1}^{T S D} g_{k-1}^{\top} A g_{k-2}\\
&=-\alpha_{k-1}^{T S D} g_{k-2}^{\top}\left(I-\alpha_{k-2}^{T S D} A\right) A g_{k-2}.
\end{aligned}$$
By Cauchy-Schwartz inequality, it holds that
$$g_{k-2}^{\top}\left(I-\alpha_{k-2}^{T S D} A\right) A g_{k-2}=g_{k-2}^{\top} A g_{k-2}-\frac{g_{k-2}^{\top} g_{k-2} g_{k-2}^{\top} A^{2} g_{k-2}}{g_{k-2}^{\top} A g_{k-2}} \leq 0.$$
The equal sign holds if and only if there exists $\hat{\lambda}\in\mathbb{R}$, such that  $A g_{k-2}=\hat{\lambda} g_{k-2}$. But, if so, then 
$\alpha_{k-2}^{T S D}=\frac{g_{k-2}^{\top} g_{k-2}}{g_{k-2}^{\top} A g_{k-2}}=\frac{1}{\hat{\lambda}}$,
$g_{k-1}=\left(I-\alpha_{k-2}^{T S D} A\right) g_{k-2}=0$. This is a contradiction with $g_{k-1}\ne 0$, since we are computing $p_k$ and $\alpha_k^{TSD}$ now. So, the equal sign does not hold. 

On the other hand, 
$$\alpha_{k-2}^{TSD}=\frac{g_{k-2}^{\top}g_{k-2}}{g_{k-2}^{\top}Ag_{k-2}}>0,\ \alpha_{k-1}^{TSD}=\frac{g_{k-1}^{\top}g_{k-1}}{g_{k-1}^{\top}Ag_{k-1}}>0.$$ Therefore,
$$g_{k}^{\top} g_{k-2}>0.$$
So, we get that $p_k^{\top}g_k=g_{k}^{\top}\left(-\alpha_{k-1}^{T S D} g_{k-1}-\alpha_{k-2}^{T S D} g_{k-2}\right)=-\alpha_{k-2}^{T S D}g_{k}^{\top} g_{k-2}<0$.
The proof is finished.
\end{proof}

In geometric perspective, Lemma \ref{TSDlem} is shown as the fact that the non-right angle in a right-angled triangle must be acute. Such as $\angle x_2x_0x_1$ in Fig. \ref{2-dimensionalSD}.

%
In the following, we can prove the R-linear convergence result of TSD.

\begin{theorem}
\label{SD2_con}
For the TSD method to solve {\rm(\ref{object_function_intro})} with $f(x)$ defined in {\rm(\ref{fx})} and {\rm(\ref{fa})}, the sequence $\left\{x_k\right\}$ converge at least R-linearly to the minimum with converging factor $\left(\frac{\kappa-1}{\kappa+1}\right)^{\frac{j-1}{j}}$, where $\kappa$ is the condition number of $A$ defined by $\kappa=\frac{\lambda_n}{\lambda_1}$. 
\end{theorem}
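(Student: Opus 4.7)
The plan is to combine the classical Kantorovich estimate for exact-line-search steepest descent with the monotonicity of the TSD step, exploiting the fact that exactly one of every $j$ consecutive iterations is a TSD step (when $k\neq 0$ and $\mathrm{mod}(k,j)=0$) while the remaining $j-1$ are pure SD steps. I would work in the energy norm $\|x-x^*\|_A^2 = g^{\top} A^{-1} g = 2(f(x)-f^*)$, which is the natural Lyapunov function for exact line search on a strongly convex quadratic and is equivalent to the Euclidean norm with constants depending only on $\lambda_1,\lambda_n$.

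For the $j-1$ SD iterations in each cycle ($p_k=-g_k$), I would use the standard identity
\[
\|x_{k+1}-x^*\|_A^2 = \|x_k-x^*\|_A^2\left(1-\frac{(g_k^{\top} g_k)^2}{(g_k^{\top} A g_k)(g_k^{\top} A^{-1} g_k)}\right)
\]
and invoke the Kantorovich inequality on the fraction above to conclude $\|x_{k+1}-x^*\|_A \leq \frac{\kappa-1}{\kappa+1}\|x_k-x^*\|_A$. For the one TSD iteration in each cycle, Lemma \ref{TSDlem} provides $p_k^{\top} g_k<0$, so the exact line search step gives the decrement
\[
f(x_k)-f(x_{k+1}) = \frac{(p_k^{\top} g_k)^2}{2\, p_k^{\top} A p_k} \geq 0,
\]
which in the energy norm reads $\|x_{k+1}-x^*\|_A \leq \|x_k-x^*\|_A$. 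Hence the TSD step never undoes the SD progress accumulated in the same cycle.

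Chaining these two estimates across any block of $j$ consecutive iterations yields $\|x_{(m+1)j}-x^*\|_A \leq \bigl(\tfrac{\kappa-1}{\kappa+1}\bigr)^{j-1}\|x_{mj}-x^*\|_A$, and telescoping gives $\|x_{mj}-x^*\|_A \leq \bigl(\tfrac{\kappa-1}{\kappa+1}\bigr)^{m(j-1)}\|x_0-x^*\|_A$. Equivalence of the $A$-norm and Euclidean norm, together with taking $k$-th roots, produces
\[
\limsup_{k\to\infty}\|x_k-x^*\|^{1/k}\leq \left(\frac{\kappa-1}{\kappa+1}\right)^{(j-1)/j},
\]
which is exactly R-linear convergence with the claimed factor. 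The main obstacle is conceptual rather than technical: one must resign oneself to treating the TSD step merely as non-expansive in the $A$-norm, so that all quantitative decrease is credited to the $j-1$ SD steps. No sharper bound on the TSD step is required for the stated rate, though any quantitative decrease at $\mathrm{mod}(k,j)=0$ would have to be exploited separately in order to account for the superlinear behavior observed numerically.
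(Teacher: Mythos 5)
Your proposal is correct and follows essentially the same route as the paper's own proof: Kantorovich applied to the $j-1$ exact-line-search SD steps per cycle, Lemma~\ref{TSDlem} used only to make the single TSD step non-expansive in the $A$-norm (via $f(x_{k+1})<f(x_k)$), and chaining over blocks of $j$ to obtain the factor $\left(\frac{\kappa-1}{\kappa+1}\right)^{(j-1)/j}$. The paper merely spells out the intermediate indices $k=sj+p$ explicitly, which your limsup argument covers implicitly.
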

\begin{proof}
We need to show $\lim\limits_{k\rightarrow\infty}\|x_k-x^*\|\le c\left(\left(\frac{\kappa-1}{\kappa+1}\right)^{\frac{j-1}{j}}\right)^k$, where $c$ is a positive constant, $x^*$ is the minimum point. 
Here we use the norm $\|x\|_A\triangleq\langle x,Ax\rangle$. 
%

When $k \neq n j$ or $k = 0$, by (\ref{TSDeq}) and (\ref{pk}), it holds that $p_{k}=-g_{k}$, $\alpha_{k}^{TSD}=\alpha_{k}^{SD}=\frac{g_{k}^{\top} g_{k}}{g_{k}^{\top} A g_{k}}$. Together with $g_{k}=A\left(x_{k}-x^{*}\right)$, we obtain that
\begin{equation}
\begin{aligned}
\label{iter}
\frac{\left\|x_{k+1}-x^{*}\right\|_{A}^{2}}{\left\|x_{k}-x^{*}\right\|_{A}^{2}}
&=\frac{\left(x_{k}-\alpha_{k}^{T S D} g_{k}-x^{*}\right)^{\top} A\left(x_{k}-\alpha_{k}^{TSD} g_{k}-x^{*}\right)}{\left(x_{k}-x^{*}\right)^{\top} A\left(x_{k}-x^{*}\right)}\\
&=1-\frac{\left(g_{k}^{\top} g_{k}\right)^{2}}{\left(g_{k}^{\top} A g_{k}\right)\left(g_{k}^{\top} A^{-1} g_{k}\right)}.
\end{aligned}
\end{equation}
Using Kantorovich inequality and notice that $A$ is a positive definite symmetric matrix, when $g_{k} \neq 0$, it holds that
\begin{equation}
\label{Kantorovich}
\frac{\left(g_{k}^{\top} g_{k}\right)^{2}}{\left(g_{k}^{\top} A g_{k}\right)\left(g_{k}^{\top} A^{-1} g_{k}\right)} \geqslant \frac{4 \lambda_n \lambda_1}{\left(\lambda_n+\lambda_1\right)^{2}},
\end{equation}
where $\lambda_1$ and $\lambda_n$ are the smallest and the largest eigenvalue of $A$ respectively. 

Combining (\ref{iter}) and (\ref{Kantorovich}), we get
$$\frac{\left\|x_{k+1}-x^{*}\right\|_{A}^{2}}{\left\|x_{k}-x^{*}\right\|_{A}^{2}} \leqslant\left(\frac{\lambda_n-\lambda_1}{\lambda_n+\lambda_1}\right)^{2},$$
that is,
$$\frac{\left\|x_{k+1}-x^{*}\right\|_{A}}{\left\|x_{k}-x^{*}\right\|_{A}} \leqslant \frac{\kappa-1}{\kappa+1},\quad \kappa=\frac{\lambda_n}{\lambda_1}.$$

When $k\ne0$ and $mod(k,j)=0$, 
by Lemma \ref{TSDlem}, it holds that $f(x_{k+1})<f(x_k)$. In fact, we know the minimal point of our object problem is $x^*=-A^{-1}b$, so, equivalently it holds that $\frac12(x_{k+1}-x^*)^{\top}A(x_{k+1}-x^*)-\frac12{x^*}^{\top}Ax^*<\frac12(x_{k}-x^*)^{\top}A(x_{k}-x^*)-\frac12{x^*}^{\top}Ax^*$, equivalently that is $\frac{\|x_{k+1}-x^*\|_A}{\|x_{k}-x^*\|_A}<1$.

So, for the TSD method, it holds that 
$$\left\{\begin{aligned}
&\frac{\|x_{k+1}-x^*\|_A}{\|x_{k}-x^*\|_A}\le\frac{\kappa-1}{\kappa+1}, &when\ mod(k,j)\ne0,\\
&\frac{\|x_{k+1}-x^*\|_A}{\|x_{k}-x^*\|_A}<1, &\hbox{otherwise}.
\end{aligned}\right.$$

Denote $a_0=\|x_0-x^*\|$, $\forall s\in\mathbb{Z}^+$, it holds that
$$\left\{\begin{aligned}
&\|x_{sj}-x^*\|_A=a_0\prod_{k=0}^{sj}\frac{\|x_{k+1}-x^*\|_A}{\|x_{k}-x^*\|_A}< a_0\left(\frac{\kappa-1}{\kappa+1}\right)^{s(j-1)}, &\\
&\|x_{sj+p}-x^*\|_A=a_0\prod_{k=0}^{sj+p}\frac{\|x_{k+1}-x^*\|_A}{\|x_{k}-x^*\|_A}< a_0\left(\frac{\kappa-1}{\kappa+1}\right)^{s(j-1)+p-1}, 
&p=2,\cdots, j-1,\\
&\|x_{sj+1}-x^*\|_A=a_0\prod_{k=0}^{sj+1}\frac{\|x_{k+1}-x^*\|_A}{\|x_{k}-x^*\|_A}< a_0\left(\frac{\kappa-1}{\kappa+1}\right)^{s(j-1)}. &
\end{aligned}\right.$$

That is,
$$\left\{\begin{aligned}
&\|x_{k}-x^*\|_A=a_0\prod_{i=0}^{k}\frac{\|x_{i+1}-x^*\|_A}{\|x_{i}-x^*\|_A}\le a_0\left(\frac{\kappa-1}{\kappa+1}\right)^{k\frac{j-1}{j}}, &k=sj\\
&\|x_{k}-x^*\|_A=a_0\prod_{i=0}^{k}\frac{\|x_{i+1}-x^*\|_A}{\|x_{i}-x^*\|_A}\le a_0\left(\frac{\kappa-1}{\kappa+1}\right)^{k\frac{sj-s+p-1}{sj+p}}, &k=sj+p,\quad\ 
p=2,\cdots, j-1,\\
&\|x_{k}-x^*\|_A=a_0\prod_{i=0}^{k}\frac{\|x_{i+1}-x^*\|_A}{\|x_{i}-x^*\|_A}\le a_0\left(\frac{\kappa-1}{\kappa+1}\right)^{k\frac{sj-s}{sj+1}}, &k=sj+1.
\end{aligned}\right.$$
Thus, we get for the TSD method 
$$\lim\limits_{k\rightarrow\infty}\|x_{k}-x^*\|_A\le \lim\limits_{s\rightarrow\infty}a_0\left(\frac{\kappa-1}{\kappa+1}\right)^{k*\min\left\{\frac{j-1}{j}, \frac{sj-s+p-1}{sj+p}, \frac{sj-s}{sj+1}\right\}}=a_0\left(\left(\frac{\kappa-1}{\kappa+1}\right)^{\frac{j-1}{j}}\right)^k.$$

This result shows that the iteration sequence $x_k$ at least converge R-linearly with a factor $\left(\frac{\kappa-1}{\kappa+1}\right)^{\frac{j-1}{j}}$.
\end{proof}
%

\begin{remark}
Theorem \ref{SD2_con} shows that TSD is R-linear convergent for strongly convex quadratic problems, but it does not reveal that TSD is superior to SD. In fact, we are hoping that $\frac{\|x_{sj+1}-x^*\|_A}{\|x_{sj}-x^*\|_A}\ll1$ in TSD for positive integers $s$, $j$ to guarantee TSD's good effect in practice. $\frac{\|x_{sj+1}-x^*\|_A}{\|x_{sj}-x^*\|_A}\ll1$ often occurs when j is not so small. Specifically,  with the results of Theorem 4 in \cite{Akaike1959}, for a specific quadratic problem, for all $\epsilon\in(0,1)$, there exists $J$ is a positive integer, such that $\frac{\|x_{sj+1}-x^*\|_A}{\|x_{sj}-x^*\|_A}<\epsilon$, for all $j>J$. However, for all general strongly convex quadratic problems, it's hard to say whether there is such a $J$ can guarantee the same result.
\end{remark}

\section{TSD algorithm for unconstrained optimization}
 In this section, we discuss unconstrained optimization algorithms related to the TSD method. A gradient algorithms will be proposed that are mainly based on the TSD method.

We first introduce the  strong Wolfe line search condition for computing the stepsize $\alpha_k$  \cite{JorgeNocedal2006}: 
\begin{align}
f(x_k)-f(x_k-\alpha_kp_k)&\ge c_1\alpha_kg_k^{\top}p_k,\label{W1}\\
|g(x_k-\alpha_kp_k)^{\top}p_k|&\le c_2|g_k^{\top}p_k|, \label{W2}
\end{align}
where $0<c_1<c_2<1$, stepsize $\alpha_k>0$, $p_k$ is a descent direction. Follow \cite{Dai2003}, in our numerical experiment, we will use $c_1=10^{-4}$, $c_2=0.1$, and the line search begins from a unit initial stepsize. 

In this way, the TSD algorithm for unconstrained optimization is given as follows. 
\begin{algorithm}[!ht]
	\caption{The TSD algorithm for unconstrained optimization}
    \label{TSDaluo}
    \begin{algorithmic}[0] 
 \renewcommand{\algorithmicrequire}{\textbf{Input:}}
        \Require  initial point $x_0\in\mathbb{R}^n$, $c_1=10^{-4}$, $c_2=0.1$, integer $j\ge 3$, tolerance $\epsilon>0$, $k:=0$. 
\State S1: If $\|g_k\|\le \epsilon$, stop, otherwise, compute $p_k$ by (\ref{pk});
\State S2: Compute $\alpha_k^{TSD}$ by the  strong Wolfe line search (\ref{W1}), (\ref{W2});
\State S3: $x_{k+1}=x_k+\alpha_k^{TSD}p_k$. $k:=k+1$. Go to S1.
    \end{algorithmic}
\end{algorithm}

Suppose that  $f$  is bounded below in  $\mathbb{R}^{n}$  and that  $f$ is continuously differentiable in an open set  $\mathcal{N}$  containing the level set  $\mathcal{L} \stackrel{\text { def }}{=}\left\{x: f(x) \leq f\left(x_{0}\right)\right\}$, where  $x_{0}$  is the starting point of the iteration. Assume also that the gradient  $g=\nabla f$ is Lipschitz continuous on $\mathcal{N}$. So, as a corollary of Theorem 3.2. in \cite{JorgeNocedal2006}, it holds that Algorithm \ref{TSDaluo} with $\epsilon=0$ converges in the sense that the gradient norms $\|g_k\|$ converge to $0$. 

\section{Numerical results}
\label{results}
In this section, we evaluate TSD on strongly convex quadratic problems and general unconstrained problems from the CUTEst collection \cite{Gould2014}. This section consists of three parts: (i), we investigate the performance of the TSD method on different values of $j$ and different condition numbers in quadratic problems. (ii), we compare the TSD method with other competitive gradient methods in quadratic problems.  (iii), we explore the performance of the TSD method and the BBQ method \cite{Huang2021} on general unconstrained problems from the CUTEst collection \cite{Gould2014} with dimension less than or equal to 10000. All the compared methods were implemented on a PC with an AMD Ryzen 5 4500U, 2.38 GHz processor and 16 GB of RAM running Windows 11 system. Matlab (v.9.10.0-R2021a) is used for the first 2 parts, Julia (version 1.11.5) is used for the third part. Main part of our codes is available online (see Declarations section).

\subsection{Changes in $j$ as well as condition number}
\label{4_1}

We will take a look at the performance of the TSD method due to the effect of parameter $j$ and the condition number. 

As to the range of $j$. Based on Theorem \ref{SD2_con}. The bigger the $j$ is, the bigger the index $\frac{j-1}{j}$, the bigger the R-linear convergence rate. However, when $j=100$, the index $\frac{j-1}{j}=0.99$ is close to its upper bound 1 enough. Based on Theorem 4 in \cite{Akaike1959}, the bigger the $j$ is, the higher possibility that the gradients will be alternate in two directions, which means better performance of TSD. Therefore, we will test some big values of $j$, we decide to set $j\in\{10,50,100,200,300,500,1000\}$.

We tested the following quadratic problem \cite{Yuan2006}:

\begin{equation}
\min _{x \in \mathbb{R}^{n}} f(x)=\left(x-x^{*}\right)^{\top} \operatorname{diag}\left\{v_{1}, \ldots, v_{n}\right\}\left(x-x^{*}\right),
\end{equation}
where  $x^{*}$  was randomly generated with components in  $[-10,10]$  and  $v_{i}$, $i=1,\ 2,\ \ldots,\ n $, were generated according to five different distributions listed as follows.
%

\begin{itemize}
\item \textbf{set 1} $\left\{v_{2}, \ldots, v_{n-1}\right\} \subset(1, \kappa), \quad v_{1}=1, \quad v_{n}=\kappa$.
\item \textbf{set 2} $v_{i}=1+(\kappa-1) s_{i}, \quad s_{i} \in(0.8,1), \quad i=1, \ldots, \frac{n}{2} $, \quad
  $s_{i} \in(0,0.2), \quad i=\frac{n}{2}+1, \ldots, n$.
\item \textbf{set 3} $\left\{v_{2}, \ldots, v_{n / 5}\right\} \subset(1,100), \quad\left\{v_{n / 5+1}, \ldots, v_{n-1}\right\} \subset\left(\frac{\kappa}{2}, \kappa\right), \quad v_{1}=1, \quad v_{n}=\kappa$.
\item \textbf{set 4} $v_{i}=\kappa^{\frac{n-i}{n-1}}$.
\item \textbf{set 5}  $\left\{v_{2}, \ldots, v_{4 n / 5}\right\} \subset(1,100), \quad\left\{v_{4 n / 5+1}, \ldots, v_{n-1}\right\} \subset\left(\frac{\kappa}{2}, \kappa\right), \quad v_{1}=1, \quad v_{n}=\kappa$.
\end{itemize}

The problem dimension was set to  $n=10000$ in this test. The iteration was stopped once the gradient norm reduces by a factor of  $\epsilon$  or exceeds the maximum iteration number $1e5$. We test $\kappa\in\{  10^{4},10^{5}, 10^{6}, 10^{10}\}$ and  $\epsilon\in\{10^{-6}, 10^{-9}, 10^{-12}\}$. For each value of $\kappa$ or $\epsilon$, average number of iterations over 10 different starting points with entries randomly generated in $[-10,10]$ are presented in Table \ref{result3}.

From the result here, we can see that the iteration number of TSD increases as $\kappa$ increases. This is consistent with our intuition. TSD with $j=10$ performs best when $\kappa=1e4$ for all problem set. But when $\kappa=1e5$, $1e6$, $1e10$, TSD with $j=50$, $100$ perform best for problem set 1, 2, 3, 5. So, when condition number increases, the $j$ of TSD with the best performance increases.

We compute the average iteration numbers of Table \ref{result3} 
corresponding to different problem sets, and report the data in Table \ref{result3_3}. Furthermore, we plot a figure of the average iteration numbers and the corresponding parameter $j$, see Figure \ref{fig1}. From Figure \ref{fig1}, we can see that, the iteration numbers of problem set 1, 2, 3 have similar trend with the iteration number corresponding to all 5 problem sets, while the iteration numbers of problem set 4, 5 have a different trend. This reveals that the spectrum distribution of a strongly convex quadratic problem affects the performance of TSD. From the line corresponding to "all sets", we can see that, for the 5 sets, overall, TSD performs the best when j=50. Furthermore, its performance gets worse rapidly when $j$ is larger. Therefore, the results imply that smaller $j$ leads to better performance in this example. 

\begin{table}
\caption{Average number of iterations required by the TSD methods on the five problem sets in Subsection \ref{4_1}}
\label{result3}
\tiny
\begin{tabular}{|c|c|c|c|c|c|c|c|c|}
\hline  $\kappa$  & $\epsilon$ & TSD10$^1$ & TSD50& TSD100 & TSD200  &TSD300&TSD500&TSD1000 \\
\hline \multicolumn{9}{|c|}{problem set 1} \\
\hline \multirow{3}{*}{ $10^{4}$ } &  $10^{-6}$  &676.5&908.2&1986&1847.2&2708.7&3950.9&4398.6\\
 &  $10^{-9}$  &1291.7&1708.2&2582.9&3554.5&4437.4&6615.8&7928.4\\ 
 &  $10^{-12}$  &1553.9&2510.3&2735.7&4132.6&5123.9&8483.2&9973.9 \\
\hline \multirow{3}{*}{ $10^{5}$ } &  $10^{-6}$  &1044.9&1454.7&3034.3&3414.5&3141.5&6766.7&8701.3\\
  &  $10^{-9}$  &3487.8&2550.2&2937.9&6100.8& 8506.9&10044.8&18450.4 \\
  &  $10^{-12}$  &6611.5&2551.6&3771.9&6202.1 &8659.5&13760.3&17114.6\\
\hline \multirow{3}{*}{ $10^{6}$ } &  $10^{-6}$  &636.3&960.8&2059.6&2153.4&2508.3& 4423.3& 4439.5\\
  &  $10^{-9}$  &6081.6&2587.7& 3543.3&8661.8&17131.6&18719.2&25636.4 \\
  &  $10^{-12}$  &9482.1&3384.3&4403.7&8341.5&10360.3& 15468.6&25533.5\\
\hline \multirow{3}{*}{ $10^{10}$ } &  $10^{-6}$  &644.5&959.9&2040&2141.6&2566.6&4294.2&4437\\
  &  $10^{-9}$  &1729.2&1846.5&2512.2&4929.2&5683.4&9050.1&10863.2\\
  &  $10^{-12}$  &22905.9&4316.9&5336.3&7322.7&10515.6&19222.8&24150.1\\
\hline\multicolumn{9}{|c|}{problem set 2} \\
\hline \multirow{3}{*}{ $10^{4}$ } &  $10^{-6}$  &889.2&1543.9&1781.7&3587.1&4023.7&3956.6&4948.6\\
  &  $10^{-9}$  &1364.2&1698.2&2565&4341.4&4913.1&7979.6&10213.6\\ 
  &  $10^{-12}$  &2205.6&2252.3&2748.4&4911.5&7058.7&7849.4&16200 \\
\hline \multirow{3}{*}{ $10^{5}$ } &  $10^{-6}$  &1583.6&2006.8&2849.1&5553.7&5930.6&7817.6&10062.7\\
  &  $10^{-9}$  &2710.6&2141.7&3912.2&6917.7& 10770.6&15196.8&27449.4 \\
  &  $10^{-12}$  &7024.8&2938.4&4561.1&6611.4 &13932.5&13394.3&26879.2\\
\hline \multirow{3}{*}{ $10^{6}$ } &  $10^{-6}$  &1149.9&2070.2&2570&4331.5&4656.6&5586.6&8132.7\\
  &  $10^{-9}$  &6837.7&3661.4&5193.4&8296.1&17271.5&15597.4&35428.3\\
  &  $10^{-12}$  &8194.5&6999.8&4750.6&9841.6&15184.4&19786&34674.7\\
\hline \multirow{3}{*}{ $10^{10}$ } &  $10^{-6}$  &1063.8&2071.3&2585.6&4326.7&4667.7&5551.8&8177.6\\
  &  $10^{-9}$  &3153.7&2844.2&3458.4&8920.7&15350.9&13102.5&27804.3\\
  &  $10^{-12}$  &13165.4&7561.7&10123.4&15992.8&26555&34211.4&46405.6\\
\hline\multicolumn{9}{|c|}{problem set 3} \\
\hline \multirow{3}{*}{ $10^{4}$ } &  $10^{-6}$  &586&643.6&1935.5&2545.2&2911.9&4169&5201.7\\
  &  $10^{-9}$  &992&1083&2794.8&3829.9&5368.1&5748.7&8963.6\\ 
  &  $10^{-12}$  &1255.3&1389&4001.2&3486.2&5850.5&7557.5&12555.3\\
\hline \multirow{3}{*}{ $10^{5}$ } &  $10^{-6}$  &544.9&959.6&1220.7&2543.6&3857.8&6057.7&13858.9\\
  &  $10^{-9}$  &1249.3&2099.8&2003.7&5849.3&6216.8&13492.2&27481.4\\
  &  $10^{-12}$  &2162.4&1367.4&2358.2&5338.5&6644.7&10285.8&40796.4\\
\hline \multirow{3}{*}{ $10^{6}$ } &  $10^{-6}$  &664&899.3&1117.7&2152.5&2845.9&5213.3&10080.1\\
  &  $10^{-9}$  &3755.7&2255.1&2425.9&4766&6736.6&12406.6&23000.9\\
  &  $10^{-12}$  &5536.8&2281.4&3830.8&6714.8&10604.7&12994.5&20852.2\\
\hline \multirow{3}{*}{ $10^{10}$ } &  $10^{-6}$  &11.9&13&13&13&13&13&13\\
  &  $10^{-9}$  &27723.5&17579.9&9524.4&8043.3&11114.7&14753.9&14805.3\\
  &  $10^{-12}$  &68232.2&88600.7&79426.8&74621.2&93690.3&*$^2$&*\\
\hline\multicolumn{9}{|c|}{problem set 4} \\
\hline \multirow{3}{*}{ $10^{4}$ } &  $10^{-6}$  &1619.6&2191.5&1999.3&2871.6&3318&3962.7&3811.6\\
  &  $10^{-9}$  &1894.5&3396.1&1911.4&4403.5&4182.7&	4599.5&7024.9\\ 
  &  $10^{-12}$  &2761.7&3508.2&2716.2&6037.6&5097.6&5953.7&8256.7\\
\hline \multirow{3}{*}{ $10^{5}$ } &  $10^{-6}$  &2054.8&3378.2&4292.2&4621.1&3760.2&7583.5&6826.4\\
  &  $10^{-9}$  &4201.2&4301.9&3382.6&6104.2&6136&8199.1&10825.5\\
  &  $10^{-12}$  &5616.2&12963.1&7461.6&7596.8&9530.3&8079.2&15844.4\\
\hline \multirow{3}{*}{ $10^{6}$ } &  $10^{-6}$  &3753.9&4843&6018.6&5906.2&6322.4&6735.9&9119.3\\
  &  $10^{-9}$  &10507.3&17514&22655&41992.9&35490.5&24991.4&25421.2\\
  &  $10^{-12}$  &14916&44738.9&56434.5&74172&65703.9&58601.3&40133.8\\
\hline \multirow{3}{*}{ $10^{10}$ } &  $10^{-6}$  &9313.2&3770.5&7262&5024.9&5932.8&5343&18663.2\\
  &  $10^{-9}$  &59696.7&89058.7&88372.1&94300.2&*&*&96656.9\\
  &  $10^{-12}$  &79538.5&94545.2&*&*&*&*&*\\
\hline\multicolumn{9}{|c|}{problem set 5} \\
\hline \multirow{3}{*}{ $10^{4}$ } &  $10^{-6}$  &582&1038.8&2420.6&1739.3&2410.2&2914.9&4038.9\\
  &  $10^{-9}$  &1084.5&1415.5&3454.5&2564&3341.6&3701.1&5520.8\\ 
  &  $10^{-12}$  &1336.2&1905.4&4419.2&3217.4&4540.6&5184.1&6726.3\\
\hline \multirow{3}{*}{ $10^{5}$ } &  $10^{-6}$  &528&895.6&1342.3&2247.6&2471.1&4216.9&6021.5\\
  &  $10^{-9}$  &1323.6&1981.4&3566.5&4641.7&4724.6&6073.4&10917.4\\
  &  $10^{-12}$  &1994.9&3155.5&4877.7&8699.6&6875.3&9182&14118.3\\
\hline \multirow{3}{*}{ $10^{6}$ } &  $10^{-6}$  &913&905.1&1146.4&1561.5&2766.7&4163.6&6866\\
  &  $10^{-9}$  &4063&3558.1&5852.1&9564.8&12577&17867.8&17106.2\\
  &  $10^{-12}$  &5484.9&3961.4&5578.2&10421.6&11581.4&18540.7&22034\\
\hline \multirow{3}{*}{ $10^{10}$ } &  $10^{-6}$  &11.4&12.3&12.3&12.3&12.3&12.3&12.3\\
  &  $10^{-9}$  &42714.9&25543.2&17093.4&6742.4&10232.4&7302.2&9167.5\\
  &  $10^{-12}$  &*&47466.5&94131.9&43662.2&70681.9&70590.7&*\\
\hline
\end{tabular}
\begin{tablenotes}%
\item[$^{1}$] TSD10 means the TSD method with $j=10$, the same as TSD50 and so on.
\item[$^{2}$] * means the max iteration number is reached.
\end{tablenotes}
\end{table}

\begin{table}
\caption{Average number of iterations in Table \ref{result3}}
\label{result3_3}
\begin{tabular}{|c|c|c|c|c|c|c|c|}
\hline \diagbox{set}{method}& TSD10 & TSD50& TSD100 & TSD200  &TSD300&TSD500&TSD1000\\
\hline set1 &4678.83&2144.94&3078.65&4900.16&6778.64&10066.66&13468.91\\
\hline set2 &4111.92&3149.16&3924.91&6969.35&10859.61&12502.50&21364.73\\
\hline set3 &9392.83&9930.98&9221.06&9991.96&12987.92&16057.68&23134.07\\
\hline set4 &16322.80&23684.11&25208.79&29419.25&28789.53&27837.44&28548.66\\
\hline set5 &13336.37&7653.23&11991.26&7922.87&11017.93&12479.14&16877.43\\
\hline all 5 sets &9568.55&9312.49&10684.93&11840.72&14086.73&15788.69&20678.76\\
\hline
\end{tabular}
\end{table}

\begin{figure*}[!ht]%
\centering
\includegraphics[width=\textwidth]{./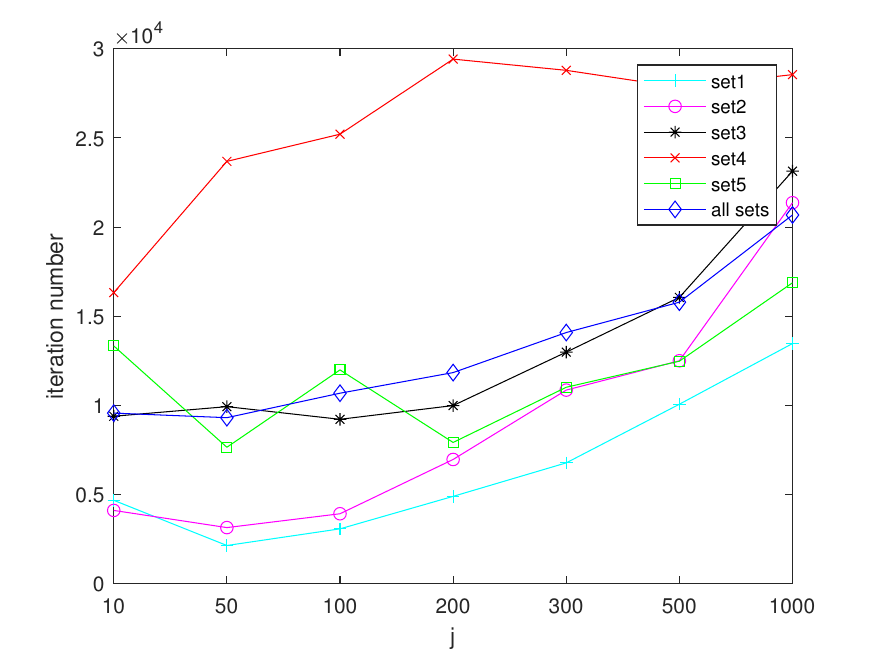}
\caption{Average iteration number vs. parameter $j$ }\label{fig1}
\end{figure*}

We also make Figure \ref{fig3}, "$\log_{10}\|x-x^*\|_2$ vs. iteration number", with using an example form Problem set 1 with $\kappa=1e4$, $\epsilon=1e-12$ as a numerical verification of Theorem \ref{SD2_con}. After the iteration number 500 or so, SD turns into a linear convergence rate, while TSD methods show superlinear convergence rate. This result is consistent with the conclusion of Theorem \ref{SD2_con}, it also reveals that TSD's actual converge property is much better and can be further explored.
\begin{figure*}[!ht]%
\centering
\includegraphics[width=\textwidth]{./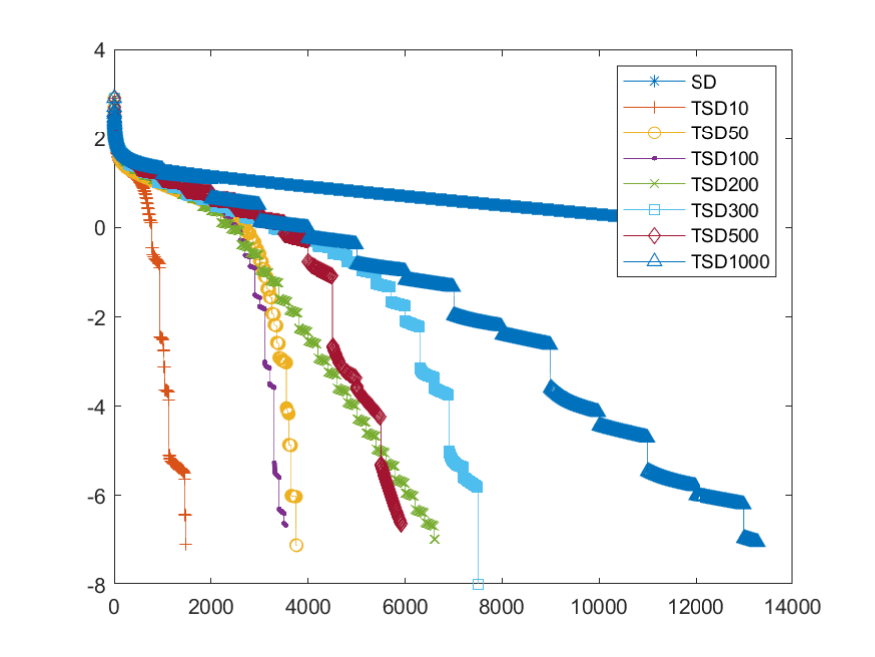}
\caption{$\log_{10}\|x-x^*\|_2$ vs. iteration number }\label{fig3}
\end{figure*}

\subsection{Comparison with other competitive gradient methods}
\label{comparison_result}
We compared the method (3.2) with other recent successful gradient methods including  \cite{Huang2022a}:

(i) BB \cite{Barzilai1988}: the original BB method using  $\alpha_{k}^{B B 1}$;

(ii) DY \cite{Dai2005}: the monotone Dai-Yuan gradient method (formula (5.3) in \cite{Dai2005});

(iii) ABBmin 2 \cite{Frassoldati2008}: a gradient method adaptively using  $\alpha_{k}^{B B 1}$  and a short stepsize in \cite{Frassoldati2008};

(iv) BBQ \cite{Huang2021}: a gradient method adaptively using  $\alpha_{k}^{B B 1}$  and the short stepsize  $\min \left\{\alpha_{k-1}^{B B 2}, \alpha_{k}^{B B 2}, \alpha_{k}^{B B Q}\right\}$.

We test the same problem as in last subsection \ref{4_1}. 
The parameters of each method are chosen to achieve their best performance in our test. In particular, for ABBmin2 \cite{Frassoldati2008}, we use $\tau\in\{0.1,0.2, \ldots, 0.9\}$. For BBQ \cite{Huang2021}, we chose  $\tau$  from  $\{0.1,0.2, \ldots, 0.9\}$ for each given  $\gamma \in\{1,1.02,$ $1.05,1.1,1.2,1.3\}$. 

 We set  $\kappa\in\{10^{4}, 10^{5}, 10^{6}\}$, max iteration number is set to $5e4$, and other settings are not changed. The results are presented in Table \ref{result1}.

We can see that our method TSD with $j\in\{10, 50, 100\}$ performs better than BB and DY in problem set 1, 2, 3, 5, comparable to them in problem set 4 in the sense of number of iterations, and do not better than the ABBmin2 and BBQ methods in all problem sets in the sense of number of iterations. It shows that the TSD method is an effective gradient method, its performance has something to do with the spectral distribution of a quadratic problem.

\begin{table}
\caption{Average number of iterations required by the compared methods on the five problem sets in Subsection \ref{4_1}}
\label{result1}
\small
\begin{tabular}{|c|c|c|c|c|c|c|c|c|}
\hline  $\kappa$  & $\epsilon$ & BB1 & DY & ABBmin2 & BBQ  &TSD10&TSD50&TSD100 \\
\hline \multicolumn{9}{|c|}{problem set 1} \\
\hline \multirow{3}{*}{ $10^{4}$ } &  $10^{-6}$  &1017	&497.5&403.2&381.2&676.5&908.2&1986\\
  &  $10^{-9}$  &1625.4&950.2&744.7&747.4&1291.7&1708.2&2582.9\\ 
  &  $10^{-12}$  &2136.5&1419.9&1007.7&1048.3&1553.9&2510.3&2735.7\\
\hline \multirow{3}{*}{ $10^{5}$ } &  $10^{-6}$  &2405.9&1004.9&657.5&634.7&1044.9&1454.7&3034.3\\
  &  $10^{-9}$  &4162&2680.2&1205.4&1377.7& 3487.8&2550.2&2937.9 \\
  &  $10^{-12}$  &5078.6&4341.4&1437.9&1776.8 &6611.5&2551.6&3771.9\\
\hline \multirow{3}{*}{ $10^{6}$ } &  $10^{-6}$  &5759.1&550.9& 457.6 &428.4&636.3&960.8&2059.6\\
  &  $10^{-9}$  & 10067.4& 8545& 1231.9& 1630.7&6081.6&2587.7& 3543.3 \\
  &  $10^{-12}$  &10915& 13538.4&1434.6&2059.4&9482.1&3384.3&4403.7\\
\hline \multicolumn{9}{|c|}{problem set 2} \\
\hline \multirow{3}{*}{ $10^{4}$ } &  $10^{-6}$  & 1051.3& 475.9&397.1&386.9&889.2&1543.9&1781.7\\
  &  $10^{-9}$  &1583.1&909.9&734.4&712.3 & 1364.2&1698.2&2565\\
  &  $10^{-12}$  & 2237.8& 1365.5&984.6&889.6&2205.6&2252.3&2748.4\\
\hline \multirow{3}{*}{ $10^{5}$ } &  $10^{-6}$  & 3702.2& 1018.4& 761.8&662.1&1583.6&2006.8&2849.1\\
  &  $10^{-9}$  &6513.7&2970.1& 1550.8&1424.4&2710.6&2141.7&3912.2\\
  &  $10^{-12}$  &7413.3&4401.6&1818.5&1756.3&7024.8&2938.4&4561.1\\
\hline \multirow{3}{*}{ $10^{6}$ } &  $10^{-6}$  &14220.9&661.4&579.5&533.8 &1149.9&2070.2&2570\\
  &  $10^{-9}$  &23090.1&9208.9&1841.8&1882.1&6837.7&3661.4&5193.4\\
  &  $10^{-12}$  &23531.3&15261.7&2201.3&2272.2&8194.5&6999.8&4750.6\\
\hline \multicolumn{9}{|c|}{problem set 3} \\
\hline \multirow{3}{*}{ $10^{4}$ } &  $10^{-6}$  &662.8& 332.6&243.6&231.4&586&643.6&1935.5\\
  &  $10^{-9}$  & 1028.3&708.7&436.3&442.6&992&1083&2794.8\\
  &  $10^{-12}$  &1386.2& 978.8&611.7&566.4&1255.3&1389&4001.2\\
\hline \multirow{3}{*}{ $10^{5}$ } &  $10^{-6}$  & 1365.2&696.8&249.4&244.2&544.9&959.6&1220.7\\
  &  $10^{-9}$  & 2489.1& 1823.9&521.9&550.4&1249.3&2099.8&2003.7\\
  &  $10^{-12}$  &3126.5&2570.3&708.7&639.6&2162.4&1367.4&2358.2\\
\hline \multirow{3}{*}{ $10^{6}$ } &  $10^{-6}$  & 3047& 517.3& 79.6&138.8&664&899.3&1117.7\\
  &  $10^{-9}$  & 5005.6&3369.6&555.7&528.6&3755.7&2255.1&2425.9 \\
  &  $10^{-12}$  &6527.7&4722.2&678.1&562.7&5536.8&2281.4&3830.8\\
\hline \multicolumn{9}{|c|}{problem set 4} \\
\hline \multirow{3}{*}{ $10^{4}$ } &  $10^{-6} $ & 982.6&553.6&494.7&485.2&1619.6&2191.5&1999.3\\
  &  $10^{-9}$  & 1658.2& 1047.4& 879.3 &920.6&1894.5&3396.1&1911.4\\
  &  $10^{-12}$  &2123.1&1471.4&1186.4&1182.8&2761.7&3508.2&2716.2\\
\hline \multirow{3}{*}{ $10^{5}$ } &  $10^{-6}$  & 2578.2& 1288.6&1114.8&1050.2&2054.8&3378.2&4292.2\\
  &  $10^{-}$  & 4537.5&3262.7&2114.6&2241.2&4201.2&4301.9&3382.6\\
  &  $10^{-12}$  &5650.2&5033& 2334& 2610.3& 5616.2&12963.1&7461.6\\
\hline \multirow{3}{*}{ $10^{6}$ } &  $10^{-6}$  &6802.3&1849.5&1565&1450.9&3753.9&4843&6018.6\\
  &  $10^{-9}$  & 12200.5&11987.5&3126.7&4078.2&10507.3&17514&22655\\
  &  $10^{-12}$  &12585.7& 16035.8&2993.8&4616.9&14916&44738.9&56434.5\\
\hline \multicolumn{9}{|c|}{problem set 5} \\
\hline \multirow{3}*{ $10^{4}$ } &  $10^{-6}$  & 581.9&348.8&248.9&237.8&582&1038.8&2420.6\\
  &  $10^{-9}$  &955.7& 702.2&469.2&442.5&1084.5&1415.5&3454.5\\
  &  $10^{-12}$  &1275.4&1030.3& 612.6&565.2&1336.2&1905.4&4419.2\\
\hline \multirow{3}*{ $10^{5}$ } &  $10^{-6}$  & 1218.5&762.5&258.1&249.6&528&895.6&1342.3\\
  &  $10^{-9}$  &2243.1&1745.1&549&557.1&1323.6&1981.4&3566.5\\
  &  $10^{-12}$  &2930.5&2573.9&706.6&596&1994.9&3155.5&4877.7\\
\hline \multirow{3}*{ $10^{6}$ } &  $10^{-6}$  &2460.5&612.9&87.7&149.8&913&905.1&1146.4\\
  &  $10^{-9}$  &4953.6&3483.5&519.4&493.9&4063&3558.1&5852.1\\
  &  $10^{-12}$  &5776.5&4634.6&629.1&517&5484.9&3961.4&5578.2\\
\hline
\end{tabular}
\end{table}

\subsection{Results on general unconstrained problems from the CUTEst collection}\label{CUTEST}
In this subsection, We report numerical results of Algorithm \ref{TSDaluo} (the TSD algorithm) on general unconstrained problems from the CUTEst collection \cite{Gould2014} with dimension less than or equal to 10000, whose total number is 280. As shown in the before two subsections, only BBQ and ABBmin \cite{Frassoldati2008,diSerafino2018} perform better than TSD in our example, generally parameter 10, 50 work better for TSD. As shown in \cite{Huang2021}, the BBQ method is much faster than the ABBmin method in these unconstrained problems from the CUTEst collection.     
 Hence, we only compared the TSD algorithm with the BBQ algorithm for unconstrained optimization, and use $j\in\{10, 50\}$ for the TSD algorithm only. To this end, we use the Julia interface of CUTEst\footnote{Refer to: https://github.com/JuliaSmoothOptimizers/CUTEst.jl}, rewrite the BBQ algorithm \footnote{Downloaded form: https://lsec.cc.ac.cn/~dyh/software.html} code in the Julia language, and implemented the methods by Julia (version 1.11.5). 

The termination condition used for solving a problem here is $\|g\|_{\infty}<10^{-6}$. There are 3 problems in the problem set in this subsection satisfies the termination condition from the beginning, corresponding to problem names "FLETCBV2", "S308NE", "MOREBV", so we do not report the results of them. We use the default setting of the BBQ algorithm, it also terminates if its iterations over 200,000, or its function evaluation exceeds one million, or its Grippo-Lampariello-Lucidi (GLL) nonmonotone line search fails. Similarly, the TSD algorithm also terminates, for both $j=10$ and $j=50$, if its iterations over 10,000. In the situation that a method terminates without satisfying the termination condition, we treated this method as failed. We deleted the problems if it can not be solved by one of the three methods, and 182 problems are left. 

\begin{figure}
\caption{Performance profile}
\label{fig2}
\includegraphics[width=\textwidth]{./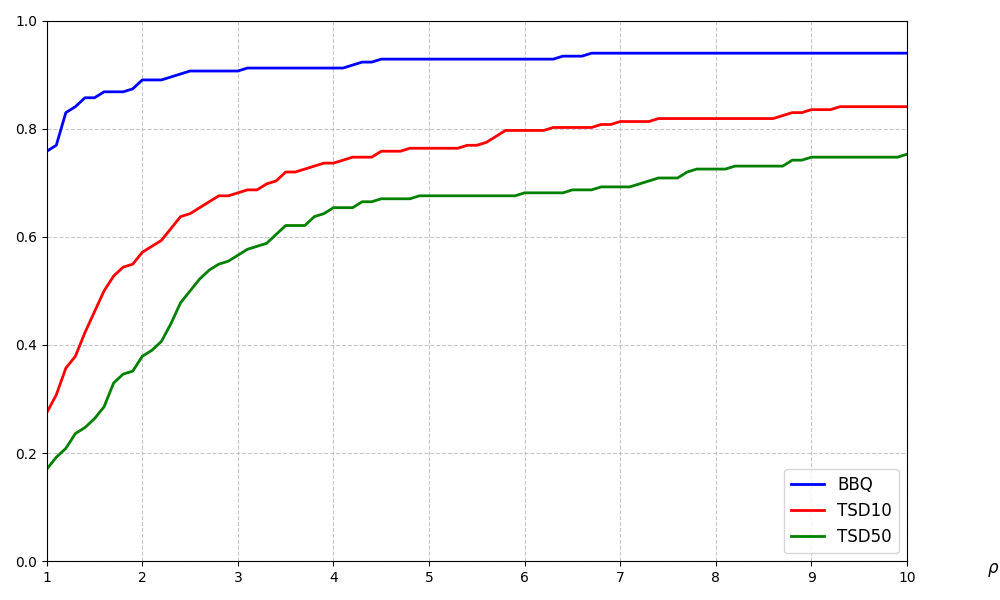}
\end{figure}

The strong Wolfe condition line search used in the TSD algorithm are implemented by the classical way as Algorithm 3.5 and Algorithm 3.6 in \cite{JorgeNocedal2006}. Therefore, every line search used by the TSD algorithm will use both function evaluations and gradient evaluations, while every line search used by BBQ, which is Grippo-Lampariello-Lucidi (GLL) nonmonotone line search, use only function evaluations. So, every iteration of the TSD algorithm will naturally takes longer time than every iteration of the BBQ algorithm. Hence, we only make the performance profile \cite{Dolan2001} \footnote{Performance profiles is the distribution functions for a certain performance metric which is a great tool for comparing optimization methods or software. \cite{Dolan2001}.}of the TSD algorithm and the BBQ algorithm using iteration metric as plotted in Figure \ref{fig2}. For each method, the vertical axis of the figure shows the percentage of problems the method solves within the factor $\rho$ of the minimum value of the iteration metric. From Figure \ref{fig2}, it can be seen that the TSD algorithm performs not better than the BBQ algorithm. 

In Tables \ref{A_table_start}-\ref{A_table_last} in Appendix \ref{A_tables}, we present the number of iterations “iter”, and the CPU time in seconds “time” costed by the TSD algorithm and the BBQ algorithm, which show that the TSD algorithm is not faster than the BBQ algorithm on most of the problems.
%

Numerical experiments demonstrate that TSD, as a first-step formalization of the direction-aggregation idea, already shows competitive and sometimes superior performance to well-established methods like BB and DY in ill-conditioned quadratic problems. While its current performance on general unconstrained problems is not yet superior to state-of-the-art stepsize-based methods like BBQ, it opens promising directions for future research. A promising avenue is developing adaptive strategies for the parameter $j$.

\section{Conclusions}
\label{conclusion}
In this paper, we propose the TSD method, leveraging SD's geometric property which can reach the minimum in 3 iterations in quadratic problems. To the best of our knowledge, this is the first formalization of this idea since Akaike (1959)  \cite{Akaike1959}. 
The gradient sequence generated by the  TSD method is proved to be R-linearly convergent for strongly convex quadratic problems. 
Numerical experiments show TSD has superlinear convergence behavior for strongly convex quadratic problems, TSD outperforms the BB and DY method in quadratic cases and TSD has potential for its success in general unconstrained optimization.
Future work will focus on extending the convergence theory of TSD to general non-quadratic objectives, exploring adaptive strategies for the cycle parameter $j$.

\section*{Declarations}

\begin{itemize}
\item Funding: This work was supported by National Natural Science Foundation of China grants (No. 11271032 and No. 12271526).
\item Competing interests: The authors have no relevant financial or non-financial interests to disclose.
\item Code availability: Most of the codes in this paper are available at https://github.com/ya-shen200012/TSD/releases/tag/v1.1.
\end{itemize}

\begin{appendices}

\section{Tables}\label{A_tables}
Tables for subsection \ref{CUTEST}.
\begin{table}
\caption{Results of the TSD algorithm and the BBQ algorithm on 182 unconstrained problems from the CUTEst collection}
\label{A_table_start}
\begin{tabular}{|l|l|ll|ll|ll|}
\hline \multirow[c]{2}{*}{problem} & \multirow[c]{2}{*}{ n } & \multicolumn{2}{|c|}{BBQ} & \multicolumn{2}{|c|}{TSD10$^{1}$}  & \multicolumn{2}{|c|}{TSD50}\\
 
& & iter & time & iter & time & iter & time 
\\\hline ALLINITU&4&15&0.0&15&0.0&18&0.0
\\\hline ARGLINA&200&2&0.002&1&0.001&1&0.0
\\\hline ARGTRIGLS&200&1234&0.469&2363&6.082&10000&25.222
\\\hline ARWHEAD&5000&3&0.002&704&1.419&10000&19.358
\\\hline BA-L1LS&57&31&0.0&61&0.01&67&0.009
\\\hline BARD&3&72&0.001&399&0.003&1703&0.013
\\\hline BDQRTIC&5000&74&0.04&3984&11.624&10000&24.162
\\\hline BEALE&2&39&0.0&53&0.0&252&0.002
\\\hline BIGGS6&6&294&0.001&1832&0.042&4653&0.103
\\\hline BOX&10000&25&0.04&9671&83.821&10000&81.392
\\\hline BOX3&3&30&0.0&174&0.002&366&0.004
\\\hline BOXBODLS&2&7&0.0&10000&0.2&10000&0.141
\\\hline BRKMCC&2&8&0.0&24&0.0&89&0.001
\\\hline BROWNAL&200&49069&6.541&233&0.244&869&0.918
\\\hline BROWNDEN&4&62&0.0&541&0.012&7649&0.139
\\\hline BROYDN3DLS&5000&80&0.025&36&0.055&58&0.094
\\\hline BROYDN7D&5000&2489&4.853&2693&34.451&4154&49.762
\\\hline BROYDNBDLS&5000&50&0.031&42&0.115&52&0.147
\\\hline BRYBND&5000&50&0.027&42&0.12&52&0.146
\\\hline CHAINWOO&4000&574&0.211&796&1.913&1401&3.163
\\\hline CHNROSNB&50&887&0.005&1428&0.038&2060&0.055
\\\hline CHNRSNBM&50&1033&0.007&1807&0.049&2352&0.064
\\\hline CHWIRUT1LS&3&168&0.004&2&0.0&2&0.0
\\\hline CHWIRUT2LS&3&66&0.001&2&0.0&2&0.0
\\\hline CLIFF&2&36&0.001&172&0.001&660&0.003
\\\hline CLUSTERLS&2&18&0.0&16&0.0&26&0.001
\\\hline COSINE&10000&21&0.035&10000&161.47&10000&158.436
\\\hline CRAGGLVY&5000&170&0.127&284&1.535&376&1.792
\\\hline CUBE&2&139&0.0&363&0.003&1063&0.006
\\\hline DANIWOODLS&2&23&0.0&1&0.0&1&0.0
\\\hline DENSCHNA&2&14&0.0&21&0.0&43&0.0
\\\hline DENSCHNB&2&8&0.0&13&0.0&13&0.0
\\\hline DENSCHNC&2&6&0.0&32&0.0&76&0.0
\\\hline DENSCHNE&3&47&0.0&12&0.0&11&0.0
\\\hline DENSCHNF&2&17&0.0&21&0.0&24&0.0
\\\hline DIXMAANA1&3000&7&0.002&7&0.003&7&0.009
\\\hline DIXMAANB&3000&7&0.002&6&0.003&6&0.003
\\\hline DIXMAANC&3000&8&0.002&7&0.007&7&0.005
\\\hline DIXMAAND&3000&9&0.002&8&0.109&8&0.005
\\\hline DIXMAANE1&3000&357&0.048&411&0.242&752&0.473
\\\hline DIXMAANF&3000&293&0.057&453&0.379&702&0.625
\\\hline DIXMAANG&3000&346&0.064&682&0.608&820&0.771
\\\hline DIXMAANH&3000&242&0.041&624&0.522&650&0.594
\\\hline DIXMAANI1&3000&2307&0.304&8678&5.702&6852&4.273
\\\hline DIXMAANJ&3000&303&0.056&495&0.441&675&0.645
\\\hline
\end{tabular}
\begin{tablenotes}%
\item[$^{1}$] Here, TSD10 means the TSD algorithm with $j=10$, the same as TSD50 and following tables. 
\end{tablenotes}
\end{table}
\begin{table}
\caption{Results of the TSD algorithm and the BBQ algorithm on 182 unconstrained problems from the CUTEst collection}
\begin{tabular}{|l|l|ll|ll|ll|}
\hline \multirow{2}{*}{problem} & \multirow{2}{*}{ n } & \multicolumn{2}{|c|}{BBQ} & \multicolumn{2}{|c|}{TSD10}  & \multicolumn{2}{|c|}{TSD50}\\
 & & iter & time & iter & time & iter & time 
\\\hline DIXMAANK&3000&312&0.065&367&0.318&652&0.57
\\\hline DIXMAANL&3000&220&0.042&327&0.281&517&0.445
\\\hline DIXMAANM1&3000&2904&0.395&10000&6.812&10000&6.303
\\\hline DIXMAANN&3000&702&0.141&1251&1.117&1814&1.648
\\\hline DIXMAANO&3000&747&0.152&1651&1.637&1802&1.803
\\\hline DIXMAANP&3000&605&0.104&1352&1.162&1456&1.411
\\\hline DIXON3DQ&10000&4854&1.54&10000&15.48&10000&13.845
\\\hline DQDRTIC&5000&14&0.006&81&0.186&102&0.219
\\\hline ECKERLE4LS&3&4&0.0&5&0.0&5&0.0
\\\hline EDENSCH&2000&39&0.009&46&0.05&54&0.058
\\\hline EG2&1000&5&0.001&13&0.02&95&0.103
\\\hline EGGCRATE&2&8&0.0&6&0.001&6&0.0
\\\hline EIGENBLS&2550&41398&176.381&10000&221.064&10000&213.936
\\\hline EIGENCLS&2652&63696&286.209&10000&258.136&10000&247.75
\\\hline ELATVIDU&2&18&0.0&36&0.0&46&0.0
\\\hline ENGVAL1&5000&24&0.012&38&0.088&48&0.105
\\\hline ENGVAL2&3&98&0.0&843&0.007&1852&0.015
\\\hline ENSOLS&9&92&0.015&143&0.192&224&0.304
\\\hline ERRINROS&50&2993&0.018&10000&0.282&10000&0.268
\\\hline EXP2&2&22&0.0&31&0.001&51&0.0
\\\hline EXPFIT&2&33&0.001&42&0.0&132&0.003
\\\hline EXTROSNB&1000&24814&1.317&10000&3.473&10000&3.279
\\\hline FLETBV3M&5000&48&0.072&10000&107.267&10000&99.211
\\\hline FLETCHCR&1000&341&0.03&903&0.493&2502&1.279
\\\hline FMINSRF2&5625&697&0.353&977&2.321&1156&2.577
\\\hline FMINSURF&5625&931&0.762&2189&5.264&1804&4.142
\\\hline FREUROTH&5000&59&0.031&10000&28.632&10000&29.527
\\\hline GAUSS1LS&8&3078&0.273&10000&4.065&10000&4.052
\\\hline GAUSS2LS&8&5349&0.427&203&0.114&252&0.143
\\\hline GAUSSIAN&3&5&0.0&12&0.0&52&0.002
\\\hline GBRAINLS&2&16&0.022&1115&12.609&1725&19.158
\\\hline GENHUMPS&5000&9313&17.843&10000&141.232&10000&140.006
\\\hline GENROSE&500&3152&0.123&4312&1.052&8599&2.003
\\\hline GROWTHLS&3&1&0.0&1&0.0&1&0.0
\\\hline GULF&3&351&0.03&10000&5.093&10000&4.78
\\\hline HAIRY&2&43&0.0&1635&0.023&1055&0.012
\\\hline HATFLDD&3&36&0.0&2422&0.036&3053&0.043
\\\hline HATFLDE&3&113&0.0&1048&0.026&1404&0.033
\\\hline HATFLDFL&3&167&0.0&10000&0.056&10000&0.069
\\\hline HATFLDFLS&3&455&0.001&6612&0.045&10000&0.067
\\\hline HATFLDGLS&25&177&0.0&715&0.01&752&0.009
\\\hline HEART8LS&8&750&0.003&1749&0.018&2942&0.039
\\\hline HELIX&3&63&0.0&134&0.001&552&0.005
\\\hline HILBERTA&2&7&0.0&31&0.0&54&0.001
\\\hline HILBERTB&10&7&0.0&8&0.0&8&0.0
\\\hline
\end{tabular}
\end{table}
\begin{table}
\caption{Results of the TSD algorithm and the BBQ algorithm on 182 unconstrained problems from the CUTEst collection}
\begin{tabular}{|l|l|ll|ll|ll|}
\hline \multirow{2}{*}{problem} & \multirow{2}{*}{ n } & \multicolumn{2}{|c|}{BBQ} & \multicolumn{2}{|c|}{TSD10}  & \multicolumn{2}{|c|}{TSD50}\\
& & iter & time & iter & time & iter & time 
\\\hline HIMMELBB&2&1&0.0&9&0.0&9&0.0
\\\hline HIMMELBCLS&2&15&0.0&12&0.001&21&0.0
\\\hline HIMMELBF&4&5402&0.016&10000&0.098&10000&0.103
\\\hline HIMMELBG&2&10&0.0&9&0.0&9&0.0
\\\hline HIMMELBH&2&12&0.0&16&0.0&17&0.0
\\\hline HUMPS&2&168&0.001&106&0.001&307&0.003
\\\hline INTEQNELS&502&7&0.017&6&0.066&6&0.074
\\\hline JENSMP&2&27&0.0&1&0.0&1&0.001
\\\hline JIMACK&3549&6346&191.317&10000&2204.867&10000&2204.652
\\\hline JUDGE&2&30&0.0&41&0.001&102&0.001
\\\hline JUDGEB&2&30&0.0&41&0.0&102&0.0
\\\hline KOWOSB&4&112&0.001&462&0.003&1115&0.009
\\\hline LANCZOS1LS&6&2254&0.019&10000&0.372&10000&0.367
\\\hline LANCZOS2LS&6&2928&0.023&10000&0.383&10000&0.354
\\\hline LANCZOS3LS&6&2590&0.02&10000&0.374&10000&0.345
\\\hline LIARWHD&5000&47&0.034&2022&6.483&9153&24.407
\\\hline LOGHAIRY&2&267&0.0&974&0.01&4276&0.045
\\\hline LRA9A&123&127&0.976&144&6.264&302&12.033
\\\hline LRIJCNN1&22&54&0.611&62&6.792&106&11.299
\\\hline LRW1A&300&5123&3.025&34&0.172&52&0.282
\\\hline LRW8A&300&4434&47.292&10000&568.87&10000&522.239
\\\hline LSC1LS&3&545&0.002&281&0.002&773&0.005
\\\hline LUKSAN11LS&100&4994&0.052&3698&0.216&9642&0.547
\\\hline LUKSAN12LS&98&348&0.005&773&0.082&2656&0.29
\\\hline LUKSAN13LS&98&222&0.003&474&0.053&824&0.063
\\\hline LUKSAN14LS&98&194&0.005&300&0.014&577&0.028
\\\hline LUKSAN15LS&100&31&0.004&34&0.039&53&0.06
\\\hline LUKSAN16LS&100&37&0.002&42&0.02&55&0.025
\\\hline LUKSAN17LS&100&539&0.136&832&1.661&1359&2.736
\\\hline LUKSAN21LS&100&1726&0.016&2482&0.087&3402&0.113
\\\hline LUKSAN22LS&100&9843&0.252&10000&1.69&10000&1.708
\\\hline MANCINO&100&14&0.059&10000&349.59&10000&336.397
\\\hline MEYER3&3&128783&1.59&7792&0.209&5820&0.162
\\\hline MGH09LS&4&91&0.0&223&0.002&10000&0.067
\\\hline MGH17LS&5&11643&0.101&10000&0.41&10000&0.349
\\\hline MISRA1ALS&2&152698&1.579&12&0.0&19&0.001
\\\hline MNISTS0LS&494&1&0.061&1&0.181&1&0.183
\\\hline MNISTS5LS&494&1&0.06&1&0.18&1&0.179
\\\hline MSQRTALS&1024&4324&2.614&10000&32.376&10000&30.785
\\\hline MSQRTBLS&1024&3300&2.102&10000&32.671&10000&31.036
\\\hline MUONSINELS&1&35&0.002&10000&7.776&3973&3.088
\\\hline NCB20&5010&417&0.67&10000&94.405&10000&90.689
\\\hline NCB20B&5000&2933&4.361&10000&97.128&10000&95.492
\\\hline NELSONLS&3&200000&4.365&3&0.0&3&0.0
\\\hline NONCVXU2&5000&18483&20.813&10000&68.398&10000&65.306
\\\hline
\end{tabular}
\end{table}
\begin{table}
\caption{Results of the TSD algorithm and the BBQ algorithm on 182 unconstrained problems from the CUTEst collection}
\begin{tabular}{|l|l|ll|ll|ll|}
\hline \multirow{2}{*}{problem} & \multirow{2}{*}{ n } & \multicolumn{2}{|c|}{BBQ} & \multicolumn{2}{|c|}{TSD10}  & \multicolumn{2}{|c|}{TSD50}\\
 & & iter & time & iter & time & iter & time 
\\\hline NONDIA&5000&24&0.014&10000&22.655&10000&22.488
\\\hline NONDQUAR&5000&2675&0.63&5504&5.676&10000&9.335
\\\hline OSBORNEB&11&701&0.018&1934&0.289&3014&0.445
\\\hline OSCIPATH&500&12&0.001&14&0.003&15&0.003
\\\hline PALMER5C&6&27&0.0&31&0.001&60&0.0
\\\hline PALMER5D&4&78&0.001&1840&0.028&2426&0.031
\\\hline POWELLSG&5000&261&0.067&3516&3.618&6886&6.893
\\\hline POWELLSQLS&2&18766&0.06&8&0.0&8&0.0
\\\hline POWERSUM&4&8&0.0&7&0.0&7&0.0
\\\hline PRICE3&2&41&0.0&32&0.0&102&0.0
\\\hline PRICE4&2&14&0.0&103&0.001&603&0.003
\\\hline QING&100&100&0.001&92&0.002&152&0.004
\\\hline RAT43LS&4&3&0.0&285&0.005&51&0.001
\\\hline RECIPELS&3&37&0.0&32&0.0&154&0.001
\\\hline ROSENBR&2&92&0.001&132&0.0&552&0.002
\\\hline ROSENBRTU&2&124&0.0&1660&0.011&5252&0.036
\\\hline S308&2&13&0.0&13&0.0&17&0.0
\\\hline SCHMVETT&5000&62&0.086&10000&116.314&10000&109.082
\\\hline SENSORS&100&25&0.106&590&22.667&66&2.197
\\\hline SINEVAL&2&167&0.001&542&0.006&2303&0.023
\\\hline SINQUAD&5000&74&0.072&10000&71.218&10000&69.761
\\\hline SINQUAD2&5000&256&0.281&10000&80.639&10000&77.812
\\\hline SISSER&2&12&0.0&5&0.0&5&0.0
\\\hline SISSER2&2&14&0.0&7&0.0&7&0.0
\\\hline SNAIL&2&8&0.0&5&0.0&5&0.001
\\\hline SPARSINE&5000&9895&9.734&10000&99.151&10000&103.729
\\\hline SPARSQUR&10000&31&0.036&23&0.111&45&0.284
\\\hline SPIN2LS&102&120&0.02&18&0.014&18&0.016
\\\hline SPMSRTLS&4999&584&0.269&732&1.822&656&1.524
\\\hline SROSENBR&5000&16&0.004&112&0.152&410&0.531
\\\hline STRTCHDV&10&68&0.001&22&0.002&48&0.003
\\\hline TESTQUAD&10&189&0.0&10000&0.105&10000&0.1
\\\hline TOINTGOR&50&198&0.004&348&0.022&352&0.021
\\\hline TOINTGSS&5000&1&0.0&1&0.001&1&0.002
\\\hline TOINTPSP&50&203&0.001&321&0.007&313&0.009
\\\hline TOINTQOR&50&57&0.001&80&0.002&128&0.002
\\\hline TQUARTIC&5000&26&0.015&10000&16.827&10000&15.443
\\\hline TRIDIA&5000&1480&0.246&10000&12.932&10000&13.379
\\\hline TRIGON1&10&80&0.001&132&0.004&202&0.005
\\\hline TRIGON2&10&29&0.0&163&0.005&206&0.007
\\\hline VAREIGVL&5000&270&0.198&262&1.002&442&1.585
\\\hline WATSON&12&789&0.007&10000&0.297&10000&0.285
\\\hline WAYSEA1&2&36&0.0&203&0.001&803&0.006
\\\hline WAYSEA2&2&23&0.0&63&0.0&202&0.002
\\\hline WOODS&4000&219&0.052&297&0.381&352&0.697
\\\hline
\end{tabular}
\end{table}
\begin{table}
\caption{Results of the TSD algorithm and the BBQ algorithm on 182 unconstrained problems from the CUTEst collection}
\label{A_table_last}
\begin{tabular}{|l|l|ll|ll|ll|}
\hline \multirow{2}{*}{problem} & \multirow{2}{*}{ n } & \multicolumn{2}{|c|}{BBQ} & \multicolumn{2}{|c|}{TSD10}  & \multicolumn{2}{|c|}{TSD50}\\
 & & iter & time & iter & time & iter & time 
\\\hline YATP2LS&8&49&0.001&11&0.0&12&0.0
\\\hline ZANGWIL2&2&1&0.0&1&0.0&1&0.0

\\\hline
\end{tabular}
\end{table}

\end{appendices}

\bibliography{jsc_ref.bib}

\end{document}